\theoremstyle{plain}
\newtheorem{theorem}{Theorem}[section]
\newtheorem{lemma}[theorem]{Lemma}
\newtheorem{mainth}{Theorem}
\theoremstyle{definition}
\newtheorem{definition}[theorem]{Definition}
\newtheorem{notation}[theorem]{Notation}
\newtheorem*{acknowledgement}{Acknowledgement}
\theoremstyle{remark}
\newtheorem{remark}[theorem]{Remark}
\newtheorem{example}[theorem]{Example} 
\numberwithin{equation}{section}
\newcommand{\bP}{\mathbb{P}}
\newcommand{\cZ}{\mathcal{Z}}
\newcommand{\supp}{\operatorname{supp}}
\newcommand{\bN}{\mathbb{N}}
\newcommand{\bC}{\mathbb{C}}
\newcommand{\sP}{\mathsf{P}}
\newcommand{\cS}{\mathcal{S}}
\newcommand{\can}{\operatorname{can}}
\newcommand{\rd}{\mathrm{d}}  
\newcommand{\bR}{\mathbb{R}}
\newcommand{\diag}{\operatorname{diag}}
\newcommand{\ord}{\operatorname{ord}}
\newcommand{\Capa}{\operatorname{Cap}}
\newcommand{\bQ}{\mathbb{Q}}
\begin{document} 

\title[Adelically summable normalized weights]{
Adelically summable normalized weights
and adelic equidistribution 
of effective divisors having small diagonals
and small heights on the Berkovich projective lines}

\author[Y\^usuke Okuyama]{Y\^usuke Okuyama}
\address{
Division of Mathematics,
Kyoto Institute of Technology,
Sakyo-ku, Kyoto 606-8585 Japan.}
\email{okuyama@kit.ac.jp}

\date{\today}

\subjclass[2010]{Primary 37P30; Secondary 11G50, 37P50, 37F10}

\keywords{
product formula field, adelically summable normalized weight,
effective divisor, small diagonals, small heights,
asymptotically Fekete configuration, adelic equidistribution}

\begin{abstract}
 We introduce the notion of
 an adelically summable normalized weight $g$, 
 which is a family of normalized weights on 
 the Berkovich projective lines satisfying a summability condition.
 We then establish an adelic equidistribution of 
 effective $k$-divisors on the projective line 
 over the separable closure $k_s$ in $\overline{k}$ of a product formula field $k$ 
 having small $g$-heights and small diagonals. This equidistribution
 result generalizes Ye's for the
 Galois conjugacy classes of algebraic numbers
 with respect to quasi-adelic measures.
\end{abstract}

\maketitle


\section{Introduction}
Equidistribution of small points is quite classical 
(\cite{SUZ97}, \cite{Bilu97}, \cite{Rumely99}, 
\cite{ChambertLoir00}, \cite{Autissier01}, 
\cite{BakerHsia05}, \cite{BR06}, \cite{ChambertLoir06},
\cite{FR06}, and, most recently, \cite{Yuan08}) but,
to extend it to effective divisors, one would need the further assumption of
{\itshape small diagonals} (\cite{OkuDivisor}).
The {\itshape adelic} condition on weights (or metrics of line bundles)
is quite natural from the arithmetic point of view, 
but what is really necessary in the proof of the equidistribution
is a weaker {\itshape summability} (\cite{Ye15}).

Our aim in this article is to contribute to the study of adelic equidistribution of a sequence of
{\itshape effective divisors} defined over a product formula field $k$
on the projective line $\bP^1(k_s)$
over the separable closure $k_s$ of $k$
in an algebraic closure $\overline{k}$ having {\itshape small diagonals}
and {\itshape small $g$-heights} with respect to 
an {\itshape adelically summable normalized
weight} $g$. This equidistribution
result generalizes Ye \cite[Theorem 1.1]{Ye15},
which is a generalization of
Baker--Rumely \cite[Theorem 2.3]{BR06}, 
Chambert-Loir \cite[Th\'eor\`eme 4.2]{ChambertLoir06},
Favre--Rivera-Letelier \cite[Th\'eor\`eme 2]{FR06}.

In Sections \ref{sec:intro} and \ref{sec:main}, 
we recall background from 
arithmetic and potential theory on the Berkovich projective line,
introduce the notion of
an adelically summable normalized weight $g$, and state the main result
(Theorem \ref{th:arith}).
In Section \ref{sec:quantitative}, we show Theorem \ref{th:arith}.
A part of the proof of Theorem \ref{th:arith}
is an adaption of the proof of \cite[Theorem 2]{OkuDivisor}.
In Section \ref{sec:example}, we give an example of 
an adelically summable normalized weight $g$
which is not an adelic normalized weight.

\section{Background}\label{sec:intro}

For the details including references of this section, see \cite{OkuDivisor}.

\begin{definition}
An {\itshape effective $k$-divisor} 
(or an effective divisor defined over $k$) on $\bP^1(\overline{k})$
is the scheme theoretic vanishing of a non-constant homogeneous polynomial
in two variables with coefficients in $k$.
An effective $k$-divisor $\cZ$ on $\bP^1(\overline{k})$
 is said to be {\itshape on} $\bP^1(k_s)$ if $\supp\cZ\subset\bP^1(k_s)$.
\end{definition} 

Effective divisors include Galois conjugacy classes of
{\itshape algebraic numbers}, and are also called
{\itshape Galois stable multisets}.

\begin{definition}
 A field $k$ is a {\itshape product formula field} if $k$ is equipped with
 (i) the (possibly uncountable) family $M_k$ of all places of $k$, (ii) a family
 $(|\cdot|_v)_{v\in M_k}$,
 where for each $v\in M_k$, $|\cdot|_v$ is a non-trivial absolute value
 of $k$ representing $v$, and (iii) a family $(N_v)_{v\in M_k}$ in $\bN$ 
 such that the following {\itshape product formula property} holds:
 for every $z\in k\setminus\{0\}$, $|z|_v=1$
 for all but finitely many $v\in M_k$ and
 (PF) $\prod_{v\in M_k}|z|_v^{N_v}=1$.
\end{definition}

Product formula fields include number fields and function fields over curves.
A product formula field $k$ is a number field if and only if
$k$ has an {\itshape infinite} place $v$, i.e., $|\cdot|_v$ is archimedean
(see, e.g., the paragraph after
\cite[Definition 7.51]{BR10}). 

\begin{notation}
 For each $v\in M_k$, let $k_v$ be the completion
 of $k$ with respect to $|\cdot|_v$ and $\bC_v$ the completion of an algebraic
 closure of $k_v$ with respect to (the extended) $|\cdot|_v$, and
 we fix an embedding of $\overline{k}$ to $\bC_v$ which extends that of $k$ to $k_v$.
 
By convention, the dependence of a local quantity
 induced by $|\cdot|_v$ on each $v\in M_k$ is emphasized 
 by adding the suffix $v$ to it.
\end{notation}

Let $K$ be an algebraically closed field that is complete with respect to a
non-trivial absolute value $|\cdot|$ (e.g.,
$\bC_v$ for a product formula field $k$ and each $v\in M_k$),
which is either non-archimedean or archimedean.

\begin{notation}[the normalized chordal metric on $\bP^1$]
 On $K^2$, let $\|(p_0,p_1)\|$ be either the maximal norm
 $\max\{|p_0|,|p_1|\}$ (for non-archimedean $K$) 
 or the Euclidean norm $\sqrt{|p_0|^2+|p_1|^2}$ (for archimedean $K$), and
 let $\pi=\pi_K:K^2\setminus\{(0,0)\}\to\bP^1=\bP^1(K)$ be
 the canonical projection such that $\pi(p_0,p_1)=p_1/p_0\in K$ if $p_0\neq 0$
 and that $\pi(0,1)=\infty$.
 With the wedge product $(z_0,z_1)\wedge(w_0,w_1):=z_0w_1-z_1w_0$ on $K^2$,
 the {\itshape normalized chordal metric} $[z,w]$ on $\bP^1$ is 
 defined by
\begin{gather*}
 (z,w)\mapsto [z,w]=|p\wedge q|/(\|p\|\cdot\|q\|)\le 1
\end{gather*}
on $\bP^1\times\bP^1$, where $p\in\pi^{-1}(z)$ and $q\in\pi^{-1}(w)$.
\end{notation}

The {\itshape Berkovich} projective line
$\sP^1=\sP^1(K)$ is a compact augmentation of $\bP^1$.
Letting $\delta_{\cS}$ be the Dirac measure on $\sP^1$
at a point $\cS\in\sP^1$, we set
\begin{gather*}
 \Omega_{\can}:=\begin{cases}
		\delta_{\cS_{\can}} & \text{for non-archimedean } K,\\
		\omega & \text{for archimedean } K,
	       \end{cases}
\end{gather*}
where $\cS_{\can}$ is the canonical (or Gauss) point in $\sP^1$
(represented by the ring $\mathcal{O}_K=\{z\in K:|z|\le 1\}$ of $K$-integers)
for non-archimedean $K$ 
and $\omega$ is the Fubini-Study area element on $\bP^1$
normalized as $\omega(\bP^1)=1$
for archimedean $K$.
For non-archimedean $K$,
the {\itshape generalized Hsia kernel} $[\cS,\cS']_{\can}$ on $\sP^1$
with respect to $\cS_{\can}$ 
is the unique (jointly) upper semicontinuous and 
separately continuous extension of the normalized chordal metric
$[z,w]$ on $\bP^1(\times\bP^1)$ to $\sP^1\times\sP^1$. 
In particular,
\begin{gather*}
 [\cS,\cS']_{\can}\le 1\quad\text{on }\sP^1\times\sP^1
\end{gather*}
and $[\cS_{\can},\cS_{\can}]_{\can}=1$.
By convention, for archimedean $K$,
the kernel function $[\cS,\cS']_{\can}$ is defined by the $[z,w]$
itself. 

Let $\Delta=\Delta_{\sP^1}$ be the Laplacian on $\sP^1$ normalized
so that for each $\cS'\in\sP^1$, 
\begin{gather*}
 \Delta\log[\cdot,\cS']_{\can}=\delta_{\cS'}-\Omega_{\can}
\end{gather*}
on $\sP^1$.
For a construction of $\Delta$ in non-archimedean case,
see \cite[\S 5]{BR10}, \cite[\S7.7]{FJbook}, \cite[\S 3]{ThuillierThesis}
and also \cite[\S2.5]{Jonsson15}; in \cite{BR10} the opposite sign 
convention on $\Delta$ is adopted. 

\begin{definition}
 A {\itshape continuous weight $g$ on $\sP^1$} is
 a continuous function on $\sP^1$ such that
 \begin{gather*}
 \mu^g:=\Delta g+\Omega_{\can} 
 \end{gather*}
is a probability Radon measure on $\sP^1$. We also call $g$ 
a (continuous $\Omega_{\can}$-)potential on $\sP^1$ of $\mu^g$.

 For a continuous weight $g$ on $\sP^1$, 
the $g$-{\itshape potential kernel} on $\sP^1$
(or the negative of an Arakelov Green kernel function on $\sP^1$ relative to 
$\mu^g$ \cite[\S 8.10]{BR10}) is the function
\begin{gather*}
 \Phi_g(\cS,\cS'):=\log[\cS,\cS']_{\can}-g(\cS)-g(\cS')
 \quad\text{on }\sP^1\times\sP^1
\end{gather*}
and the {\itshape $g$-equilibrium energy $V_g\in(-\infty,+\infty)$ of}
$\sP^1$ 
is the supremum of the {\itshape $g$-energy} functional
\begin{gather}
 \nu\mapsto\int_{\sP^1\times\sP^1}\Phi_g\rd(\nu\times\nu)
 \label{eq:functional}
\end{gather}
on the space of all probability Radon measures $\nu$ on $\sP^1$; 
indeed, $V_g\le 2\cdot\sup_{\sP^1}|g|<\infty$ and
\begin{gather}
\int_{\sP^1\times\sP^1}\log[\cS,\cS']_{\can}
\rd(\Omega_{\can}\times\Omega_{\can})(\cS,\cS')
=\begin{cases}
-\frac{1}{2} & \text{for archimedean }K\\
0 & \text{otherwise}
\end{cases}
 >-\infty\label{eq:energybounded} 
\end{gather}
(for archimedean $K\cong\bC$, the left hand side in \eqref{eq:energybounded}
equals $\int_{\bC\setminus\{0\}}\log[\cdot,\infty]\rd\omega$, which is computed 
by $[z,\infty]=1/\sqrt{1+r^2}$ 
and $\omega=r\rd r\rd\theta/(\pi(1+r^2)^2)$
($z=re^{i\theta}$, $r>0,\theta\in\bR$)).
A probability Radon measure $\nu$ on $\sP^1$ at which
the $g$-energy functional \eqref{eq:functional} attains the supremum $V_g$
is called a {\itshape $g$-equilibrium mass distribution on} $\sP^1$;
indeed, $\mu^g$ is the unique $g$-equilibrium mass distribution on $\sP^1$
(for non-archimedean $K$, see \cite[Theorem 8.67, Proposition 8.70]{BR10}).

A continuous weight $g$ on $\sP^1$ is a {\itshape normalized weight} on $\sP^1$
if $V_g=0$.
For every continuous weight $g$ on $\sP^1$,
$\overline{g}:=g+V_g/2$ is the unique normalized weight on $\sP^1$
such that $\mu^{\overline{g}}=\mu^g$ on $\sP^1$. 
\end{definition}

\begin{example}\label{th:trivial}
The function $g_0:\equiv 0$ on $\sP^1$ is a continuous weight on $\sP^1$ since
(it is a continuous function on $\sP^1$ and)
$\mu^{g_0}=\Delta g_0+\Omega_{\can}=\Omega_{\can}$ on $\sP^1$.
By \eqref{eq:energybounded}, 
$g_0$ is itself a normalized weight on $\sP^1$ for non-archimedean $K$, 
and 
$\overline{g_0}=g_0+V_{g_0}/2\equiv -1/4$ on $\sP^1$ is
a normalized weight on $\sP^1$ for archimedean $K$.
\end{example}

\begin{definition}\label{th:Fekete}
We say a sequence $(\nu_n)$ of positive and discrete Radon measures 
on $\sP^1$
satisfying $\lim_{n\to\infty}\nu_n(\sP^1)=\infty$ has 
{\itshape small diagonals} if 
\begin{gather*}
 \lim_{n\to\infty}\frac{(\nu_n\times\nu_n)(\diag_{\bP^1(K)})}{\nu_n(\sP^1)^2}=0,
\end{gather*} 
where $\diag_{\bP^1}$ is the diagonal of $\bP^1\times\bP^1$.
For a continuous weight $g$ on $\sP^1$ and a Radon measure $\nu$ on $\sP^1$,
the {\itshape $g$-Fekete sum} with respect to $\nu$ is defined by
\begin{gather*}
 (\nu,\nu)_g:=\int_{\sP^1\times\sP^1\setminus\diag_{\bP^1}}\Phi_g\rd(\nu\times\nu),
\end{gather*}
and we say a sequence $(\nu_n)$ of positive and discrete Radon measures 
on $\sP^1$
satisfying $\lim_{n\to\infty}\nu_n(\sP^1)=\infty$
is an {\itshape asymptotically $g$-Fekete configuration on} $\sP^1$
if $(\nu_n)$ not only has small diagonals
 but also satisfies
\begin{gather*}
  \lim_{n\to\infty}\frac{(\nu_n,\nu_n)_g}{(\nu_n(\sP^1))^2}=V_g.
\end{gather*}
\end{definition}

\begin{remark}\label{th:weaker}
In the definition of an asymptotically $g$-Fekete configuration $(\nu_n)$
on $\sP^1$,
under the former small diagonal assumption,
 the latter one is equivalent to the weaker
 $\liminf_{n\to\infty}(\nu_n,\nu_n)_g/(\nu_n(\sP^1))^2\ge V_g$
 since we always have
 \begin{gather}
 \limsup_{n\to\infty}\frac{(\nu_n,\nu_n)_g}{(\nu_n(\sP^1))^2}\le V_g\label{eq:Feketeupper}
 \end{gather}
 (see, e.g., \cite[Lemma 7.54]{BR10}).
 By a classical argument (cf.\ \cite[Theorem 1.3 in Chapter III]{ST97}), 
 if $(\nu_n)$ is an asymptotically $g$-Fekete configuration on $\sP^1$, then
 the weak convergence 
 $\lim_{n\to\infty}\nu_n/(\nu_n(\sP^1))=\mu^g$ on $\sP^1$ holds. 
\end{remark}

Let $k$ be a field, and $K$ be an {\itshape algebraic and metric completion} of $k$ in that $K$ is an algebraically closed field
that is complete with respect to a non-trivial absolute value
$|\cdot|$ and is a field extension of $k$ (e.g.,
a product formula field $k$ and $K=\bC_v$ for each $v\in M_k$).
An effective $k$-divisor $\cZ$ on $\bP^1(\overline{k})$
is regarded as a positive and discrete Radon measure
$\sum_{w\in\supp\cZ}(\ord_w\cZ)\delta_w$
on $\sP^1(K)$, 
which is still denoted by $\cZ$ and whose support is in $\bP^1(\overline{k})$, 
and then the {\itshape diagonal}
\begin{gather}
 (\cZ\times\cZ)(\diag_{\bP^1(\overline{k})})=\sum_{w\in\supp\cZ}(\ord_w\cZ)^2\label{eq:diagonal}
\end{gather}
{\itshape of} $\cZ$ is independent of the choice of $K$.

\begin{definition}\label{th:mahler}
For every continuous weight $g$ on $\sP^1$,
 the {\itshape logarithmic $g$-Mahler measure} of 
 an effective $k$-divisor $\cZ$ on $\bP^1(\overline{k})$ is defined by
\begin{gather*}
 M_g(\cZ):=\int_{\sP^1}g\rd\cZ+M^\#(\cZ),
\end{gather*}
where we set
$M^{\#}(\cZ):=-\sum_{w\in\supp\cZ\setminus\{\infty\}}(\ord_w\cZ)\log[w,\infty]
\ge 0$. 
\end{definition}

\section{Main result}
\label{sec:main}
 
Let $k$ be a product formula field.
In the following, a sum over $M_k$ will be indeed 
a sum over an at most countable subset in $M_k$, the convergence of which 
will be understood in the absolute sense.

\begin{definition}\label{th:summable}
 A family $g=(g_v)_{v\in M_k}$ is an
 {\itshape adelically summable normalized weight} if 
 (i) for every $v\in M_k$, $g_v$ is a normalized weight on $\sP^1(\bC_v)$,
 i.e., $V_{g_v}=0$, and
 (ii) the following summability condition holds:
$g_v\equiv 0$ on $\bP^1(k_s)$ for every $v\in M_k$ but some 
{\itshape countable} subset $E_g$ in $M_k$, and 
\begin{gather}
 \sum_{v\in M_k}N_v\cdot\sup_{\bP^1(k_s)}|g_v|<\infty.\label{eq:summable}
\end{gather}
In particular, $\sum_{v\in M_k}N_v\cdot g_v$ is absolutely convergent
pointwise on $\bP^1(k_s)$. 
\end{definition}

\begin{remark}\label{th:Ye}
In Ye \cite[\S 2.2]{Ye15}, 
the family $(\mu^{g_v})_{v\in M_k}$
of the probability Radon measures $\mu^{g_v}=\Delta g_v+\Omega_{\can,v}$ 
on $\sP^1(\bC_v)$ associated with 
a family $g=(g_v)_{v\in M_k}$ of normalized weights $g_v$ on $\sP^1(\bC_v)$
is called a {\itshape quasi-adelic probability measure}
if the following multiplicativity condition holds:
setting
\begin{align*}
 G^v:=&g_v\circ\pi_{\bC_v}+\log\|\cdot\|_v\quad\text{on }\bC_v^2\setminus\{(0,0)\},\\
 K^v
 :=&\{p\in\bC_v^2\setminus\{(0,0)\}:G^v(p)\le 0\}\cup\{(0,0)\}\\
=&\{p\in\bC_v^2\setminus\{(0,0)\}:\|p\|_v\le e^{-g_v(\pi_{\bC_v}(p))}\}\cup\{(0,0)\},\\
 r_{\operatorname{outer}}^{\#}(K^v)
 :=&\sup\left\{\|p\|_v:p\in K^v\right\},\quad\text{and}\\
 r_{\operatorname{inner}}^{\#}(K^v)
 :=&\inf\left\{\|p\|_v:p\in\bC_v^2\setminus K^v\right\}
\end{align*}
for each $v\in M_k$ (the 
$r_{\operatorname{outer}}^{\#}(K^v)$ and $r_{\operatorname{inner}}^{\#}(K^v)$
are called the {\itshape outer} and {\itshape inner} radii of $K_v$ in 
$(\bC_v^2,\|\cdot\|_v)$, respectively, 
and
the normalization $\Capa(K^v)_v=1$ in \cite[\S 2.1]{Ye15}
is equivalent to $V_{g_v}=0$), we not only have 
$G^v\equiv\log\|\cdot\|_v$ on $\bC_v^2$,
i.e., $g_v\equiv 0$ on $\bP^1(\bC_v)$
for all but countably many $v\in M_k$ but also
\begin{gather}
\sum_{v\in M_k}N_v\cdot\log(r_{\operatorname{outer}}^{\#}(K^v))\in\bR
 \quad\text{and}\quad
 \sum_{v\in M_k}N_v\cdot\log(r_{\operatorname{inner}}^{\#}(K^v))\in\bR.
\tag{\ref{eq:summable}$'$}\label{eq:Ye}
\end{gather}

For every $v\in M_k$, we have
\begin{gather}
r_{\operatorname{outer}}^{\#}(K^v)\ge e^{-\inf_{\bP^1(k_s)}g_v}
\quad\text{and}\quad 
r_{\operatorname{inner}}^{\#}(K^v)\le e^{-\sup_{\bP^1(k_s)}g_v};\label{eq:compare} 
\end{gather}
indeed, for every $\epsilon>0$, by the continuity of $g_v$ on $\bP^1(\bC_v)$
and the surjectivity of $\pi_{\bC_v}:\bC_v^2\setminus\{(0,0)\}\to\bP^1(\bC_v)$,
there is $p\in\bC_v^2\setminus\{(0,0)\}$ such that
$e^{-\inf_{\bP^1(\bC_v)}g_v}-\epsilon<e^{-g_v(\pi_{\bC_v}(p))}$,
and by the density of $|\bC_v^*|_v$ in $\bR_{\ge 0}$,
there is $c\in\bC_v^*$ such that
$e^{-\inf_{\bP^1(\bC_v)}g_v}-\epsilon<\|c\cdot p\|_v<e^{-g_v(\pi_{\bC_v}(p))}
=e^{-g_v(\pi_{\bC_v}(c\cdot p))}$. Hence
$e^{-\inf_{\bP^1(\bC_v)}g_v}\le r_{\operatorname{outer}}^{\#}(K^v)$, 
so that $e^{-\inf_{\bP^1(k_s)}g_v}\le r_{\operatorname{outer}}^{\#}(K^v)$.
A similar argument also yields
$e^{-\sup_{\bP^1(k_s)}g_v}
\ge (e^{-\sup_{\bP^1(\bC_v)}g_v}\ge) r_{\operatorname{inner}}^{\#}(K^v)$.

In particular,
\eqref{eq:Ye} is stronger than \eqref{eq:summable};
indeed,
by \eqref{eq:compare},
the condition \eqref{eq:Ye} implies
\begin{gather*}
 \sum_{v\in M_k}N_v\cdot\inf_{\bP^1(k_s)}g_v\in\bR
 \quad\text{and}\quad\sum_{v\in M_k}N_v\cdot\sup_{\bP^1(k_s)}g_v\in\bR,
\end{gather*}
which is equivalent to \eqref{eq:summable}. 
\end{remark}

\begin{example}\label{th:adelic}
 A family $g=(g_v)_{v\in M_k}$ is an
 {\itshape adelic normalized weight} if $g$ satisfies the condition
 (i) in Definition \ref{th:summation} and 
 the {\itshape at most finitely many non-triviality} condition that
 $g_v\equiv 0$ on $\sP^1(\bC_v)$
 for all but finitely many $v\in M_k$.

 An adelic normalized weight
 $g$ is an adelically summable normalized weight.
\end{example}

\begin{definition}\label{th:height}
 The $g$-{\itshape height} of
 an effective $k$-divisor $\cZ$ on $\bP^1(k_s)$
 with respect to
 an adelically summable normalized weight $g=(g_v)_{v\in M_k}$ is
 \begin{gather*}
 h_g(\cZ):=\sum_{v\in M_k}N_v\frac{M_{g_v}(\cZ)}{\deg\cZ}.
 \end{gather*}
\end{definition}

\begin{remark}\label{th:summation}
In Definition \ref{th:height}, 
using the product formula property of $k$ 
(and by a standard argument involving the ramification theory of valuation), 
for every $v\in M_k$
but some {\itshape finite} subset $E_{\cZ}$ in $M_k$,
we have $M^\#(\cZ)_v=0$.
In particular, 
$h_g(\cZ)\in\bR$.
\end{remark}

\begin{definition}
 For an adelically summable normalized weight $g$,
 we say a sequence $(\cZ_n)$ of effective
 $k$-divisors on $\bP^1(k_s)$ has {\itshape small $g$-heights} if
\begin{gather*}
  \limsup_{n\to\infty}h_g(\cZ_n)\le 0.
\end{gather*}
\end{definition}

Our principal result is the following 
{\itshape adelic asymptotically Fekete configuration} theorem, 
which is stronger than an {\itshape adelic equidistribution} theorem and
generalizes Ye \cite[Theorem 1.1]{Ye15}.

\begin{mainth}\label{th:arith}
Let $k$ be a product formula field 
and $k_s$ the separable closure of $k$ in $\overline{k}$.
Let $g=(g_v)_{v\in M_k}$ be an adelically summable normalized weight. If
a sequence $(\cZ_n)$ of effective $k$-divisors on $\bP^1(k_s)$ 
satisfying $\lim_{n\to\infty}\deg\cZ_n=\infty$
has both small diagonals and small $g$-heights, then the uniform convergence
\begin{gather*}
\lim_{n\to\infty}\sup_{v\in M_k}N_v\left|\frac{(\cZ_n,\cZ_n)_{g_v}}{(\deg\cZ_n)^2}\right|=0
\end{gather*}
holds. In particular, for every $v\in M_k$, $(\cZ_n)$ is an 
asymptotically $g_v$-Fekete configuration on $\sP^1(\bC_v)$, so that
$\lim_{n\to\infty}\cZ_n/\deg\cZ_n=\mu^{g_v}$ weakly on $\sP^1(\bC_v)$.
\end{mainth}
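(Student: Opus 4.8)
The plan is to reduce the adelic statement to a combination of a local upper bound and a global lower bound, exactly in the spirit of the proof of \cite[Theorem 2]{OkuDivisor}. First I would record the two inputs we already have for free. For each $v\in M_k$, since $g_v$ is a normalized weight ($V_{g_v}=0$) and $(\cZ_n)$ has small diagonals, the general upper bound \eqref{eq:Feketeupper} (i.e.\ \cite[Lemma 7.54]{BR10}) gives
\begin{gather*}
 \limsup_{n\to\infty}\frac{(\cZ_n,\cZ_n)_{g_v}}{(\deg\cZ_n)^2}\le V_{g_v}=0,
\end{gather*}
and in fact a quantitative form of this, valid uniformly once one tracks the dependence on $\sup_{\sP^1}|g_v|$ and on the diagonal defect $(\cZ_n\times\cZ_n)(\diag_{\bP^1})/(\deg\cZ_n)^2$. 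The second input is the product formula identity linking the local Fekete sums to the global height: writing $\Phi_{g_v}(\cS,\cS')=\log[\cS,\cS']_{\can,v}-g_v(\cS)-g_v(\cS')$ and summing $N_v(\cZ_n,\cZ_n)_{g_v}/(\deg\cZ_n)^2$ over $v$, the diagonal-free integrals of $\log[\cdot,\cdot]_{\can,v}$ over $\cZ_n\times\cZ_n$ telescope via (PF) applied to the resultant-type quantity $\prod_{w\ne w'}[w,w']_v$ (here $w,w'$ range over $\supp\cZ_n$ with multiplicity), while the $g_v$-terms assemble into $-2\deg\cZ_n\sum_v N_v\int g_v\,\rd\cZ_n$. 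Combining this with the definition of $M_{g_v}$ and $h_g$ (Definitions \ref{th:mahler}, \ref{th:height}), one gets an identity of the shape
\begin{gather*}
 \sum_{v\in M_k}N_v\frac{(\cZ_n,\cZ_n)_{g_v}}{(\deg\cZ_n)^2}
 = -\,2\,h_g(\cZ_n) + (\text{a diagonal error term}),
\end{gather*}
where the error term is controlled by the small-diagonal hypothesis together with the summability \eqref{eq:summable} of $g$; this is where the countability of $E_g$ and the absolute convergence built into Definition \ref{th:summable} are used to justify interchanging the sum over $v$ with the divisor sums.

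Next I would run the standard Fekete dichotomy. From the displayed identity, $\limsup_n\sum_v N_v(\cZ_n,\cZ_n)_{g_v}/(\deg\cZ_n)^2\ge -2\limsup_n h_g(\cZ_n)\ge 0$ by the small-$g$-heights hypothesis (the diagonal error tends to $0$). On the other hand the local upper bounds give $\limsup_n N_v(\cZ_n,\cZ_n)_{g_v}/(\deg\cZ_n)^2\le 0$ for every $v$, and summability lets us upgrade these to $\limsup_n\sum_v N_v(\cZ_n,\cZ_n)_{g_v}/(\deg\cZ_n)^2\le 0$ by a dominated-convergence argument over $M_k$. Hence the global sum of the nonpositive-in-the-limit quantities $N_v(\cZ_n,\cZ_n)_{g_v}/(\deg\cZ_n)^2$ has limit $0$; combined with the uniform (in $v$) quantitative upper bound from the first paragraph, this forces
\begin{gather*}
 \lim_{n\to\infty}\sup_{v\in M_k}N_v\left|\frac{(\cZ_n,\cZ_n)_{g_v}}{(\deg\cZ_n)^2}\right|=0.
\end{gather*}
The "in particular" assertion is then immediate: for fixed $v$, the vanishing of the limit together with $V_{g_v}=0$ says $(\cZ_n)$ is an asymptotically $g_v$-Fekete configuration on $\sP^1(\bC_v)$ (Definition \ref{th:Fekete}), and the weak convergence $\cZ_n/\deg\cZ_n\to\mu^{g_v}$ follows from the classical energy argument recalled in Remark \ref{th:weaker}.

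The main obstacle is making the first paragraph's upper bound \emph{uniform in $v$} while the $g_v$ vary over an infinite family. The naive constant in \eqref{eq:Feketeupper} depends on $\sup_{\sP^1(\bC_v)}|g_v|$, but \eqref{eq:summable} only controls $\sup_{\bP^1(k_s)}|g_v|$, and only after multiplying by $N_v$ and summing; moreover the support of $\cZ_n$ lies in $\bP^1(k_s)$, not on all of $\sP^1(\bC_v)$. So the delicate point is to prove a version of the Fekete upper bound in which the relevant seminorm of $g_v$ is its sup over $\supp\cZ_n\subset\bP^1(k_s)$ (not over $\sP^1$), and in which the dependence on that quantity and on the diagonal defect is explicit and, crucially, summable against $(N_v)$. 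This is precisely the estimate carried out in \cite[Theorem 2]{OkuDivisor} in the adelic case, and the work here is to check that its proof only ever evaluates $g_v$ at points of $\supp\cZ_n$ and at $\cS_{\can,v}$, so that \eqref{eq:summable} suffices in place of the stronger adelic hypothesis; once that is secured, the rest is the routine dichotomy above.
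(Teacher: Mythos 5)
Your plan is essentially the paper's proof: the local bound you describe is precisely the paper's inequality (4.2), which bounds $(\cZ,\cZ)_{g_v}/(\deg\cZ)^2$ by $4\sup_{\bP^1(k_s)}|g_v|$ using only $[\cdot,\cdot]_{\can,v}\le 1$ and the fact that $g_v$ is only evaluated at points of $\supp\cZ\subset\bP^1(k_s)$; the global identity you describe is the paper's Lemma 4.2 (from \cite{OkuDivisor}) combined with the product formula applied to $D^*(\cZ|\overline{k})$, giving (4.3); and the sandwich argument with ``dominated convergence'' is the paper's reverse-Fatou step (justified by (4.2) and \eqref{eq:summable}), carried out on a subsequence. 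Your correct worry in the last paragraph — that the Fekete upper bound must be uniform with constants that depend only on $\sup_{\bP^1(k_s)}|g_v|$, not on $\sup_{\sP^1(\bC_v)}|g_v|$ — is exactly what makes (4.2) work, though note $g_v$ is never actually evaluated at $\cS_{\can,v}$ there, only at points of $\supp\cZ_n$.

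The one step you pass over too quickly is ``combined with the uniform quantitative upper bound, this forces $\sup_v\to 0$.'' That implication is not automatic: from $\lim_n\sum_v N_v(\cZ_n,\cZ_n)_{g_v}/(\deg\cZ_n)^2=0$ and $\lim_n N_v(\cZ_n,\cZ_n)_{g_v}/(\deg\cZ_n)^2=0$ for each fixed $v$, together with a uniform-in-$n$ summable upper bound $N_v(\cZ_n,\cZ_n)_{g_v}/(\deg\cZ_n)^2\le 4N_v\sup_{\bP^1(k_s)}|g_v|$, one still has to rule out a negative part of one term being compensated by positive parts of others as $v$ roams. The paper isolates this as Lemma 4.3 about doubly indexed real sequences $(a_{n,m})$ (one-sided summable bound $a_{n,m}\le b_m$ with $\sum_m b_m<\infty$, termwise limits $0$, sum tending to $0$ imply $\sup_m|a_{n,m}|\to 0$) and proves it by an explicit $\epsilon$-splitting. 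You should include that argument; without it the uniform convergence claim is unjustified even though it is true.
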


In Theorem \ref{th:arith},
if all the divisors $\cZ_n$ are the Galois conjugacy classes of
$k$-algebraic numbers, then the small diagonal assumption
always holds.

\section{Proof of Theorem \ref{th:arith}}\label{sec:quantitative}

\begin{notation}\label{th:discriminant}
Let $k$ be a field.
For an effective $k$-divisor $\cZ$ on $\bP^1(\overline{k})$, set
 \begin{gather*}
 D^*(\cZ|\overline{k})
 :=\prod_{w\in\supp\cZ\setminus\{\infty\}}\prod_{w'\in\supp\cZ\setminus\{w,\infty\}}(w-w')^{(\ord_{w}\cZ)(\ord_{w'}\cZ)}\in\overline{k}\setminus\{0\},
 \end{gather*}
 which is indeed in $k\setminus\{0\}$ if $\cZ$ is on $\bP^1(k_s)$
 $($cf.\ \cite[Theorem 7]{OkuDivisor}$)$.
\end{notation}

Recall the following local computation from \cite{OkuDivisor}.

\begin{lemma}[{\cite[Lemma 5.2]{OkuDivisor}}]\label{th:discrepancy} 
Let $k$ be a field and $K$ an algebraic and metric augmentation of $k$.
Then for every continuous weight $g$ on $\sP^1(K)$ and
every effective $k$-divisor $\cZ$ on $\bP^1(\overline{k})$,
\begin{multline}
 (\cZ,\cZ)_g
+2\cdot\sum_{w\in\supp\cZ\setminus\{\infty\}}(\ord_w\cZ)^2\log[w,\infty]\\
=\log|D^*(\cZ|\overline{k})|-2(\deg\cZ)^2\frac{M_g(\cZ)}{\deg\cZ}
+2\cdot\sum_{w\in\supp\cZ}(\ord_w\cZ)^2g(w).
\label{eq:localineq}
\end{multline}
\end{lemma}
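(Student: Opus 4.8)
The plan is to establish \eqref{eq:localineq} by a direct expansion of the finite double sum defining $(\cZ,\cZ)_g$ followed by bookkeeping of the resulting terms; no analytic input beyond the elementary properties of the kernel $[\cdot,\cdot]_{\can}$ on $\bP^1$ is required, since $\cZ$ is a finitely supported measure on $\bP^1(\overline{k})$.

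\textbf{Step 1: separating the weight.} Writing $\cZ=\sum_{w\in\supp\cZ}(\ord_w\cZ)\delta_w$ and using $\Phi_g(\cS,\cS')=\log[\cS,\cS']_{\can}-g(\cS)-g(\cS')$, I would obtain
\[
(\cZ,\cZ)_g=\sum_{\substack{w,w'\in\supp\cZ\\ w\neq w'}}(\ord_w\cZ)(\ord_{w'}\cZ)\log[w,w']_{\can}-\sum_{\substack{w,w'\in\supp\cZ\\ w\neq w'}}(\ord_w\cZ)(\ord_{w'}\cZ)\bigl(g(w)+g(w')\bigr).
\]
For the second sum I would use $\sum_{w'\in\supp\cZ\setminus\{w\}}\ord_{w'}\cZ=\deg\cZ-\ord_w\cZ$ to rewrite it as $2(\deg\cZ)\int_{\sP^1}g\,\rd\cZ-2\sum_{w\in\supp\cZ}(\ord_w\cZ)^2g(w)$; the latter sum is exactly the term $2\sum_{w\in\supp\cZ}(\ord_w\cZ)^2g(w)$ of the right-hand side, while $(\deg\cZ)\int g\,\rd\cZ$ will be absorbed into $(\deg\cZ)^2M_g(\cZ)/\deg\cZ$ in Step 3.

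\textbf{Step 2: the chordal term.} For distinct finite $z,w\in\bP^1$, applying the defining formula $[z,w]_{\can}=|p\wedge q|/(\|p\|\cdot\|q\|)$ with $p=(1,z)$, $q=(1,w)$, together with $[z,\infty]_{\can}=1/\|p\|$, yields the factorization $\log[z,w]_{\can}=\log|z-w|+\log[z,\infty]_{\can}+\log[w,\infty]_{\can}$ (in the non-archimedean case this uses only that the generalized Hsia kernel restricts to the normalized chordal metric on $\bP^1\times\bP^1$). Separating the point $\infty$ from $\supp\cZ$ when present, applying this factorization to pairs of finite points, and again invoking $\sum_{w'\neq w}\ord_{w'}\cZ=\deg\cZ-\ord_w\cZ$, the sum $\sum_{w\neq w'}(\ord_w\cZ)(\ord_{w'}\cZ)\log[w,w']_{\can}$ collapses to
\[
\log|D^*(\cZ|\overline{k})|-2(\deg\cZ)\,M^\#(\cZ)-2\sum_{w\in\supp\cZ\setminus\{\infty\}}(\ord_w\cZ)^2\log[w,\infty],
\]
where the first term is the logarithm of the defining product of $D^*(\cZ|\overline{k})$, and the remaining terms arise by recombining the cross terms $\log[\cdot,\infty]_{\can}$ with the pairs involving $\infty$ and using $M^\#(\cZ)=-\sum_{w\in\supp\cZ\setminus\{\infty\}}(\ord_w\cZ)\log[w,\infty]$.

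\textbf{Step 3: assembling.} Adding the outputs of Steps 1 and 2, moving $2\sum_{w\in\supp\cZ\setminus\{\infty\}}(\ord_w\cZ)^2\log[w,\infty]$ to the left, and using $M_g(\cZ)=\int_{\sP^1}g\,\rd\cZ+M^\#(\cZ)$ together with $(\deg\cZ)M_g(\cZ)=(\deg\cZ)^2M_g(\cZ)/\deg\cZ$, one recovers \eqref{eq:localineq}. The only genuinely delicate point is the bookkeeping around $\infty\in\supp\cZ$: one must verify that the ordered pairs $(w,\infty)$ and $(\infty,w)$ combine correctly with the cross terms produced by the factorization, and that degenerate cases — such as $\supp\cZ$ consisting of a single point, where $D^*(\cZ|\overline{k})$ is an empty product equal to $1$ — are consistent with the empty-sum conventions. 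Everything else is routine algebra.
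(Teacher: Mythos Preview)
Your proof is correct. The paper does not give its own argument for this lemma; it merely recalls it from \cite[Lemma 5.2]{OkuDivisor}. Your direct expansion of the finite double sum, using the factorization $\log[z,w]_{\can}=\log|z-w|+\log[z,\infty]_{\can}+\log[w,\infty]_{\can}$ for distinct finite $z,w$ together with the bookkeeping identity $\sum_{w'\neq w}\ord_{w'}\cZ=\deg\cZ-\ord_w\cZ$, is precisely the natural (and essentially the only) way to verify such a purely combinatorial identity, and the case distinction on whether $\infty\in\supp\cZ$ is handled correctly.
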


Let $k$ be a product formula field and $k_s$ the separable closure
of $k$ in $\overline{k}$, and let $g=(g_v)_{v\in M_k}$ 
be an adelically summable normalized weight.
For every $v\in M_k$ and every effective $k$-divisor $\cZ$ on $\bP^1(k_s)$, 
recalling $[\cS,\cS']_{\can,v}\le 1$ and \eqref{eq:diagonal}, we have
\begin{align}
\notag\frac{(\cZ,\cZ)_{g_v}}{(\deg\cZ)^2}
=& 
\int_{(\sP^1(\bC_v)\times\sP^1(\bC_v))\setminus\diag_{\bP^1(k_s)}}\log[\cS,\cS']_{\can,v}
\rd\frac{(\cZ\times\cZ)}{(\deg\cZ)^2}(\cS,\cS')\\
\notag&
\quad -2\cdot\int_{(\sP^1(\bC_v)\times\sP^1(\bC_v))\setminus\diag_{\bP^1(k_s)}}g_v(\cS)\rd\frac{(\cZ\times\cZ)}{(\deg\cZ)^2}(\cS,\cS')\\
\notag \le& -2\cdot\int_{\sP^1(\bC_v)}g_v\rd\frac{\cZ}{\deg\cZ}
 +2\cdot
\frac{\sum_{w\in\supp\cZ}(\ord_w\cZ)^2g_v(w)}{(\deg\cZ)^2}\\
\notag \le& -2\inf_{\bP^1(k_s)}g_v+2\cdot
\frac{(\cZ\times\cZ)(\diag_{\bP^1(k_s)})}{(\deg\cZ)^2}
\cdot\sup_{\bP^1(k_s)}g_v\\
\le & 
4\cdot\sup_{\bP^1(k_s)}|g_v|.\label{eq:uniformupper}
\end{align}
Let $E_g$ and $E_{\cZ}$ 
be as in Definition \ref{eq:summable} and Remark \ref{th:summation}, 
respectively. In Notation \ref{th:discriminant},
we already mentioned that $D^*(\cZ|\overline{k})\in k\setminus\{0\}$
when $\supp\cZ\subset\bP^1(k_s)$.
Hence by the product formula property of $k$,
$\tilde{E}_{\cZ}:=E_{\cZ}\cup\{v\in M_k:|D^*(\cZ|\overline{k})|_v\neq 1\}$
is still a finite subset in $M_k$.
For every $v\in M_k\setminus E_{\cZ}$, we have
$\sum_{w\in\supp\cZ\setminus\{\infty\}}(\ord_w\cZ)^2\log[w,\infty]_v=0$ since
\begin{gather*}
 0=-(\deg\cZ)M^\#(\cZ)_v\le
 \sum_{w\in\supp\cZ\setminus\{\infty\}}(\ord_w\cZ)^2\log[w,\infty]_v\le 0,
\end{gather*}
which with \eqref{eq:localineq} applied to $g=g_v$ yields
$(\cZ,\cZ)_{g_v}=0$ 
for every $v\in M_k\setminus(E_g\cup \tilde{E}_{\cZ})$.
Summing up $N_v\times$\eqref{eq:localineq} applied to $g=g_v$
over all $v\in M_k$ and applying (PF) to $D^*(\cZ|\overline{k})$, we have
\begin{align}
\notag\sum_{v\in M_k}N_v(\cZ,\cZ)_{g_v}
=&-2(\deg\cZ)^2h_g(\cZ)
+2\cdot\sum_{w\in\supp\cZ}(\ord_w\cZ)^2 
\sum_{v\in M_k}N_v\cdot g_v(w)\\
\notag&-2\cdot\sum_{v\in M_k}N_v\sum_{w\in\supp\cZ\setminus\{\infty\}}(\ord_w\cZ)^2\log[w,\infty]_v\\
\ge& 
-2(\deg\cZ)^2h_g(\cZ)
+2\cdot\sum_{w\in\supp\cZ}(\ord_w\cZ)^2 
\sum_{v\in M_k}N_v\cdot g_v(w).\label{eq:globalkey} 
\end{align}

Let $(\cZ_n)$ be a sequence of effective $k$-divisors on $\bP^1(k_s)$ 
satisfying $\lim_{n\to\infty}\deg\cZ_n=\infty$ and
having both small diagonals and small $g$-heights.
In the following, sums over $M_k$ are indeed
sums over $E_g\cup\bigcup_{n\in\bN}\tilde{E}_{\cZ_n}$. 
For every sequence $(n_j)$ in $\bN$ tending to $\infty$ as
$j\to\infty$ and every $v_0\in M_k$, we have
\begin{align*}
0\ge&
\limsup_{j\to\infty}N_{v_0}\frac{(\cZ_{n_j},\cZ_{n_j})_{g_{v_0}}}{(\deg\cZ_{n_j})^2}
\ge\sum_{v\in M_k}
\limsup_{j\to\infty}N_v\frac{(\cZ_{n_j},\cZ_{n_j})_{g_v}}{(\deg\cZ_{n_j})^2}\\
\ge&\limsup_{j\to\infty}\sum_{v\in M_k}N_v\frac{(\cZ_{n_j},\cZ_{n_j})_{g_v}}{(\deg\cZ_{n_j})^2}
\ge\liminf_{j\to\infty}\sum_{v\in M_k}N_v\frac{(\cZ_{n_j},\cZ_{n_j})_{g_v}}{(\deg\cZ_{n_j})^2}\\
\notag \ge& -2\cdot\limsup_{j\to\infty}h_g(\cZ_{n_j})+
2\cdot\liminf_{j\to\infty}\frac{(\cZ_{n_j}\times\cZ_{n_j})(\diag_{\bP^1(k_s)})}{(\deg\cZ_{n_j})^2}\times\sum_{v\in M_k}N_v\cdot\inf_{\bP^1(k_s)}g_v\\
\ge &0(=V_{g_{v_0}}),
\end{align*} 
where the first and second inequalities are by
\eqref{eq:Feketeupper}
applied to $g_v$ and $V_{g_v}=0$ for every $v$, 
the third one holds by Fatou's lemma,  
which can be used by \eqref{eq:uniformupper} and 
the absolute summability condition \eqref{eq:summable}, 
the fifth one is by \eqref{eq:globalkey} and \eqref{eq:diagonal},
and the final one holds under the assumption that
$(\cZ_n)$ has both small diagonals and small $g$-heights
(and $\sum_{v\in M_k}N_v\cdot\inf_{\bP^1(k_s)}g_v\in\bR$ 
by \eqref{eq:summable}). In particular, we have not only
\begin{gather}
 \lim_{n\to\infty}\sum_{v\in M_k}N_v\frac{(\cZ_n,\cZ_n)_{g_v}}{(\deg\cZ_n)^2}=0\label{eq:global}
\end{gather}
but also, for every $v\in M_k$,
\begin{gather}
 \lim_{n\to\infty}N_v\frac{(\cZ_n,\cZ_n)_{g_v}}{(\deg\cZ_n)^2}=0.\label{eq:local}
\end{gather}
The second assertion in Theorem \ref{th:arith} already follows 
from \eqref{eq:local}, and
the final one is a consequence of the second (see Remark \ref{th:weaker}).

For completeness, we include a proof of the following.

\begin{lemma}\label{th:sequence}
Let $(a_{n,m})_{n\in\bN,m\in\bN}$ be a doubly indexed sequence in $\bR$
and $(b_m)$ be a sequence in $\bR_{\ge 0}$ such 
that for every $m\in\bN$, $\sup_{n\in\bN}a_{n,m}\le b_m$, 
that $\sum_{m\in\bN}b_m<\infty$, and
that for every $n\in\bN$, $\sum_{m\in\bN}a_{n,m}$ converges $($absolutely$)$.
If $\lim_{n\to\infty}\sum_{m\in\bN}a_{n,m}=0$
and for every $m\in\bN$, $\lim_{n\to\infty}a_{n,m}=0$, 
then we have $\lim_{n\to\infty}\sup_{m\in\bN}|a_{n,m}|=0$.
\end{lemma}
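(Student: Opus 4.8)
The plan is to decompose $a_{n,m}$ into its positive and negative parts $a_{n,m}^{+}:=\max\{a_{n,m},0\}$ and $a_{n,m}^{-}:=\max\{-a_{n,m},0\}$ and to treat the two one-sided suprema separately. Since for every $n$ and $m$ one has $|a_{n,m}|=\max\{a_{n,m}^{+},a_{n,m}^{-}\}$ (at most one of the two parts is nonzero), taking the supremum over $m\in\bN$ gives $\sup_{m\in\bN}|a_{n,m}|=\max\{\sup_{m\in\bN}a_{n,m}^{+},\sup_{m\in\bN}a_{n,m}^{-}\}$, so it suffices to show $\lim_{n\to\infty}\sup_{m\in\bN}a_{n,m}^{+}=0$ and $\lim_{n\to\infty}\sup_{m\in\bN}a_{n,m}^{-}=0$.

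For the positive part, fix $\epsilon>0$ and, using $\sum_{m\in\bN}b_m<\infty$, choose $m_0\in\bN$ with $\sum_{m>m_0}b_m<\epsilon$. From $a_{n,m}\le\sup_{n\in\bN}a_{n,m}\le b_m$ and $b_m\ge 0$ we get $a_{n,m}^{+}\le b_m<\epsilon$ for all $m>m_0$ and all $n$, while for the finitely many $m\le m_0$ the hypothesis $\lim_{n\to\infty}a_{n,m}=0$ (hence $\lim_{n\to\infty}a_{n,m}^{+}=0$) gives $a_{n,m}^{+}<\epsilon$ once $n$ is large; thus $\sup_{m\in\bN}a_{n,m}^{+}<\epsilon$ for $n$ large.

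The negative part is where the real work lies, because the hypotheses bound $a_{n,m}$ only from above (by $b_m$), not $|a_{n,m}|$; this is where the absolute convergence of $\sum_{m\in\bN}a_{n,m}$ and the assumption $\lim_{n\to\infty}\sum_{m\in\bN}a_{n,m}=0$ must enter. With $\epsilon$ and $m_0$ as above, split $\sum_{m\in\bN}a_{n,m}=\sum_{m\le m_0}a_{n,m}+\sum_{m>m_0}a_{n,m}$, which is legitimate by absolute convergence. The finite sum $\sum_{m\le m_0}a_{n,m}$ tends to $0$ as $n\to\infty$ since each term does, and $\sum_{m\in\bN}a_{n,m}\to 0$, so $\sum_{m>m_0}a_{n,m}\to 0$ as $n\to\infty$. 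Now, using $a_{n,m}^{-}=a_{n,m}^{+}-a_{n,m}$, $a_{n,m}^{+}\le b_m$, and $\sum_{m>m_0}b_m<\epsilon$, one has $\sum_{m>m_0}a_{n,m}^{-}=\sum_{m>m_0}a_{n,m}^{+}-\sum_{m>m_0}a_{n,m}\le\epsilon+\bigl|\sum_{m>m_0}a_{n,m}\bigr|<2\epsilon$ for $n$ large, whence $a_{n,m}^{-}<2\epsilon$ for every $m>m_0$; and as before $a_{n,m}^{-}<\epsilon$ for the finitely many $m\le m_0$ once $n$ is large. Therefore $\sup_{m\in\bN}a_{n,m}^{-}<2\epsilon$ for $n$ large, and combining this with the bound on the positive part gives $\limsup_{n\to\infty}\sup_{m\in\bN}|a_{n,m}|\le 2\epsilon$; since $\epsilon>0$ is arbitrary and the quantity is nonnegative, the claim follows. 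The only point to check with care is that each rearrangement and term-by-term splitting of the series $\sum_{m\in\bN}a_{n,m}$ used above is justified by the assumed absolute convergence.
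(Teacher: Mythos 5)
Your proof is correct. The underlying mechanism is the same as the paper's: the one-sided bound $a_{n,m}\le b_m$ with $\sum_m b_m<\infty$ controls how large a term $a_{n,m_0}$ in the tail can be, while the hypothesis $\sum_m a_{n,m}\to 0$, combined with the same tail bound on the remaining terms, controls how negative $a_{n,m_0}$ can be. What you do differently is to package this via the positive/negative decomposition $a_{n,m}=a_{n,m}^{+}-a_{n,m}^{-}$ and the identity $\sup_m|a_{n,m}|=\max\bigl\{\sup_m a_{n,m}^{+},\sup_m a_{n,m}^{-}\bigr\}$, bounding $a_{n,m_0}^{-}$ by the full tail sum $\sum_{m>m_0}a_{n,m}^{-}=\sum_{m>m_0}a_{n,m}^{+}-\sum_{m>m_0}a_{n,m}$. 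The paper instead isolates a single $a_{n,m_0}$ directly, writing it as $\sum_m a_{n,m}-\sum_{m\le M}a_{n,m}-\sum_{m>M,\,m\ne m_0}a_{n,m}$ and estimating the three pieces, with explicit $\epsilon/4$ bookkeeping. Your version is arguably cleaner to read because the positive/negative split makes transparent which hypotheses (the $b_m$ bound versus the vanishing sum) are controlling which half of $|a_{n,m_0}|$; the paper's is more compressed. One small point worth stating explicitly in your write-up: $\sum_{m>m_0}a_{n,m}^{-}$ is finite because $a_{n,m}^{-}=a_{n,m}^{+}-a_{n,m}$ and both $\sum a_{n,m}^{+}\ (\le\sum b_m)$ and $\sum a_{n,m}$ converge absolutely — you allude to this at the end, but it is the load-bearing justification for the term-by-term identity you use.
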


\begin{proof}
 For every $\epsilon>0$, 
 there is $M\in\bN$ such that $\sum_{m>M}b_m<\epsilon/4$, and then
 there is $N\in\bN$ such that 
 for every $n>N$,  $|\sum_{m\in\bN} a_{n,m}|<\epsilon/4$ and
 $\sup_{m\le M}|a_{n,m}|<\epsilon/(4M)$.
 We claim that for every $n>N$, $\sup_{m\in\bN}|a_{n,m}|<\epsilon$; indeed,
 by the choice of $M$ and $N$, for every $n>N$, 
 $\sup_{m\le M}|a_{n,m}|<\epsilon/(4M)\le\epsilon/4$. Moreover,
 for every $m_0>M$ and every $n>N$,
 we have not only $a_{n,m_0}\le b_{m_0}\le\sum_{m>M}b_m<\epsilon/4$ but also
 \begin{multline*}
-\frac{3}{4}\epsilon
 <-\frac{\epsilon}{4}+\sum_{m\in\bN} a_{n,m}-\frac{\epsilon}{4}
=\biggl(\sum_{m\le M}a_{n,m}-\frac{\epsilon}{4}\biggr)
 +a_{n,m_0}
 +\biggl(\sum_{m>M,m\neq m_0}a_{n,m}
-\frac{\epsilon}{4}\biggr)\\
 \le\biggl(M\cdot\sup_{m\le M}|a_{n,m}|-\frac{\epsilon}{4}\biggr)
 +a_{n,m_0}
 +\biggl(\sum_{m>M}b_m-\frac{\epsilon}{4}\biggr)<a_{n,m_0},
 \end{multline*}
 so that for every $n>N$, $\sup_{m>M}|a_{n,m}|\le 3\epsilon/4$. 

Hence the claim holds, and the proof of Lemma \ref{th:sequence} is complete.
\end{proof}

Once Lemma \ref{th:sequence} is at our disposal,
the first assertion in Theorem \ref{th:arith} follows from 
\eqref{eq:uniformupper}, 
\eqref{eq:summable}, 
(\eqref{eq:globalkey},)
\eqref{eq:global}, 
and \eqref{eq:local}.
Now the proof of Theorem \ref{th:arith} is complete. \qed

\section{An Example}\label{sec:example}

Let us focus on the product formula field
$(\bQ,M_{\bQ},(N_v\equiv 1)_{v\in M_{\bQ}})$.
Recall that $M_{\bQ}\cong\{\text{prime numbers}\}\cup\{\infty\}$, where
$|\cdot|_\infty$
is the Euclidean norm on $\bQ$ and, 
for every $p\in M_{\bQ}$, $|\cdot|_p$
is the normalized $p$-adic norm on $\bQ$.

For every $p\in M_{\bQ}\setminus\{\infty\}$, 
since $|\overline{\bQ}^*|_p$ accumulates to $1$ in $\bR$, 
we can fix an $a_p\in\overline{\bQ}^*$ such that
$|a_p|_p\in(1,\exp(p^{-2})]$, and let us define the function
\begin{gather*}
 g_p(\cS):=
\begin{cases}
 -\log[a_p(\cS),\infty]_{\can,p}+\log[\cS,\infty]_{\can,p}-\frac{\log|a_p|_p}{2}
& \text{if }\cS\in\sP^1(\bC_p)\setminus\{\infty\},\\
\frac{\log|a_p|_p}{2} & \text{if }\cS=\infty
\end{cases}
\end{gather*}
on $\sP^1(\bC_p)$, where the linear function
$z\mapsto a_p(z):=a_p\cdot z$ on $\bC_p$ uniquely extends to
a continuous automorphism on $\sP^1(\bC_p)$ (see, e.g., \cite[\S 2.3]{BR10}). 
For every $p\in M_{\bQ}\setminus\{\infty\}$, since
$[\cdot,\infty]_p=1/\max\{1,|\cdot|_p\}$ on $\bC_p$, we have
\begin{multline*}
-\log[a_p(z),\infty]_p+\log[z,\infty]_p\\
=\begin{cases}
  \log\max\{1,|a_p\cdot z|_p\}-\log\max\{1,|z|_p\}\equiv 0 & \text{if }|z|_p<|a_p|_p^{-1}(<1),\\
  \log|a_p|_p+\log\min\{1,|z|_p\}(\in[0,\log|a_p|_p]) & \text{if }|z|_p\ge|a_p|_p^{-1}
 \end{cases}
\end{multline*}
on $\bC_p$, which with the density of $\bC_p$ in $\sP^1(\bC_p)$ 
implies that
$g_p$ is in fact a continuous function on $\sP^1(\bC_p)$ and satisfies
\begin{gather}
 \sup_{\sP^1(\bC_p)}|g_p|\le\frac{\log|a_p|_p}{2}\le\frac{1}{2p^2}.\label{eq:upperbound}
\end{gather}
For $v=\infty\in M_{\bQ}$,
set $g_v\equiv -1/4$ on $\sP^1(\bC_v)$.

Let us see that {\itshape the family $g:=(g_v)_{v\in M_{\bQ}}$ 
is not an adelic normalized weight 
but an adelically summable normalized weight} $($recall 
Example $\ref{th:adelic}$ and Definition $\ref{th:summable}$, respectively).

For $v=\infty\in M_{\bQ}$, $g_\infty\equiv-1/4$
is a normalized weight on $\sP^1(\bC_v)$ (see Example \ref{th:trivial}).
For every $p\in M_{\bQ}\setminus\{\infty\}$, 
$g_p$ is a continuous weight on $\sP^1(\bC_p)$ by
\begin{gather*}
 \mu^{g_p}:=\Delta g_p+\Omega_{\can,p}
 =-a_p^*(\delta_{\infty}-\delta_{\cS_{\can,p}})
 +(\delta_\infty-\delta_{\cS_{\can,p}})+\delta_{\cS_{\can,p}}
 =\delta_{a_p^{-1}(\cS_{\can,p})}
\end{gather*}
on $\sP^1(\bC_p)$ 
(for the functoriality $\Delta a_p^*=a_p^*\Delta$,
see\ e.g.\ \cite[\S 9.5]{BR10}), and 
since $[z,w]_p=|z-w|_p\cdot[z,\infty]_p\cdot[w,\infty]_p$ on $\bC_p\times\bC_p$,
we also have
\begin{gather*}
 \Phi_{g_p}(\cS,\cS')=\log[\cS,\cS']_{\can,p}-g_p(\cS)-g_p(\cS')
 =\log[a_p(\cS),a_p(\cS')]_{\can,p}
\end{gather*}
on $\bC_p\times\bC_p$, 
and in turn on $\sP^1(\bC_p)\times\sP^1(\bC_p)$ by the density of
$\bC_p$ in $\sP^1(\bC_p)$ and the separate continuity of
(the $\exp$ of) $\Phi_{g_p}$ on $\sP^1(\bC_p)\times\sP^1(\bC_p)$. Hence for every $p\in
M_{\bQ}\setminus\{\infty\}$,  
\begin{gather*}
V_{g_p}
=\int_{\sP^1(\bC_p)\times\sP^1(\bC_p)}\Phi_{g_p}\rd(\mu^{g_p}\times\mu^{g_p})
=\log[\cS_{\can,p},\cS_{\can,p}]_{\can,p}=0,
\end{gather*}
which implies that $g_p$ is still a normalized weight on $\sP^1(\bC_v)$.

For every $p\in M_{\bQ}\setminus\{\infty\}$,
$g_p\not\equiv 0$ on $\sP^1(\bC_p)$; for, if $g_p\equiv 0$ on $\sP^1$,
then we have $\delta_{\cS_{\can,p}}=\mu^{g_p}=\delta_{a_p^{-1}(\cS_{\can,p})}$
on $\sP^1(\bC_p)$, which contradicts $|a_p|_p>1$. Hence
$g$ is not an adelic normalized weight. 
On the other hand, by \eqref{eq:upperbound}, we have
\begin{gather*}
 \sum_{v\in M_{\bQ}\setminus\{\infty\}}N_v\cdot\sup_{\bP^1(\overline{\bQ})}|g_v|
\le\sum_{p\in M_{\bQ}\setminus\{\infty\}}\sup_{\sP^1(\bC_p)}|g_p|
\le\frac{1}{2}\sum_{p\in M_{\bQ}\setminus\{\infty\}}\frac{1}{p^2}<\infty,
\end{gather*}
which shows that $g$ is an adelically summable normalized weight.

\begin{remark}
 For a dynamical and highly non-trivial example, see
 DeMarco--Wang--Ye \cite[\S 7.2]{DWYquad} and Ye \cite[\S 4]{Ye15}.
\end{remark}

\begin{acknowledgement}
 The author thanks the referee for a very careful scrutiny and
 invaluable comments, which were helpful for improving the results and
 the presentation. The author also thanks Professor Katsutoshi Yamanoi
 for a useful comment on Lemma 4.3.
This research was partially supported by JSPS Grant-in-Aid for 
Young Scientists (B), 24740087 and JSPS Grant-in-Aid for 
Scientific Research (C), 15K04924.
\end{acknowledgement}

\def\cprime{$'$}

\end{document}